\newtheorem{lem}{Lemma}
\newtheorem*{thm*}{Theorem}
\newtheorem*{prop*}{Proposition}
\newtheorem*{cor*}{Corollary}
\theoremstyle{definition}
\theoremstyle{definition}
\newtheorem{rmk}{Remark}
\theoremstyle{definition}
\begin{document}

\author{Hanno von Bodecker\footnote{Fakult{\"a}t f{\"u}r Mathematik, Ruhr-Universit{\"a}t Bochum, 44780 Bochum, Germany}}

\title{On the $f$-invariant of products}

\date{}

\maketitle

\begin{abstract}
The $f$-invariant is a higher version of the $e$-invariant that takes values in the divided congruences between modular forms; in the situation of a cartesian product of two framed manifolds, the $f$-invariant can actually be computed from the $e$-invariants of the factors. The purpose of this note is to determine the $f$-invariant of all such products.
\end{abstract}

\section{Introduction and statement of the result}


In order to gain a better understanding of the stable homotopy groups of the sphere, which, by the Pontrjagin-Thom construction, can be interpreted as the bordism groups of framed manifolds, it proves helpful to organize the information using suitable invariants. In his seminal work on the $J$-homomorphism, Adams considered the $d$-invariant, which is essentially the degree of a map, and introduced the notion of a so-called $e$-invariant \cite{Adams:1966ys}. Using complex $K$-theory, he produced an invariant 
\begin{equation}
e_{\mathbb{C}}:\pi^{st}_{2k+1}\rightarrow \mathbb{Q/Z},
\end{equation}
and determined its image; to be precise, $e_{\mathbb{C}}$ maps:
\begin{itemize}
\item[(i)] $\pi^{st}_{8k+1}$ onto integer multiples of $\frac{1}{2}$,
\item[(ii)] $\pi^{st}_{8k+3}$  onto integer multiples of $\frac{B_{4k+2}}{4k+2}$,
\item[(iii)] $\pi^{st}_{8k+5}$ onto integer multiples of $1$,
\item[(iv)] $\pi^{st}_{8k+7}$ onto integer multiples of $\frac{B_{4k+4}}{8k+8}$.
\end{itemize}
Moreover, he showed that the optimal values in (ii) and (iv) are attained on the generators of Im$J$  in the corresponding dimensions (although Im$J_{8k+3}$ actually has {\em twice} the order of the denominator of $B_{4k+2}/(4k+2)$, which can be seen by making use of $KO$-theory), and constructed an 8-periodic family $\mu_{8k+1}$  ($\in$ coker$J_{8k+1}$ if $k>1$) detected by (i). The subsequent work of Conner and Floyd using bordism theory allowed the $e_{\mathbb{C}}$-invariant to be computed via the Todd genus of a complex null-bordism of the framed manifold in question \cite{Conner:1966jw}; even later, the work of Atiyah, Patodi, and Singer on index theory on manifolds with boundary \cite{Atiyah:1975kl} led to an analytic formulation of the $e$-invariant \cite{Atiyah:1975ai}, which was used by Deninger and Singhof to exhibit an infinite family of nilmanifolds representing (twice) the generator of Im$J$ in dimensions $8k+3$ ($8k+7$) \cite{Deninger:1984zt}.


Much of the recent progress on our knowledge of $\pi^{st}$ can be accredited to the Adams--Novikov spectral sequence (ANSS), 
$$E_{2}^{p,q}=Ext^{p,q}_{MU_*MU}\left(MU_*,MU_*\right)\Rightarrow\pi_{q-p}^{st}$$
and the rich algebraic structure inherent in complex oriented cohomolgy theories (see e.g.\ \cite{Ravenel:2004xh}). Within this framework, the complex $e$-invariant can be considered as taking values in the $1$-line of the ANSS, i.e.\
$$e_{\mathbb{C}}:\pi^{st}_{2k+1}\rightarrow E_{2}^{1,2k+2}\subseteq \mathbb{Q/Z}.$$
Working locally at a prime $p$ and switching to $BP$, the $1$-line is generated by the so-called alpha elements $\alpha_{i/j}\in Ext^{1,2\left(p-1\right)i}$ of order $p^j$; at odd primes, $j=1+\nu_p\left(i\right)$, and all these alphas are permanent and detect the generators of Im$J_{\left(p\right)}$. At $p=2$ however, the situation is a little bit more subtle: The elements $\alpha_{4k+3}$ do not survive; furthermore  $j=2+\nu_2\left(i\right)$ for $i=2t>2$, but for $t=2k+1$ it is actually $\alpha_{4k+2/2}$ (i.e.\ twice the generator of $Ext^{1,8k+4}$ if $k>0$) that is a permanent cycle represented by an element in Im$J$ of order eight.


In order to detect second filtration phenomena, Laures introduced the $f$-invariant, which 
is a follow-up to the $e$-invariant and takes values in the divided congruences between modular forms \cite{Laures:1999sh,Laures:2000bs}. Let us briefly recall its definition: Considering the congruence subgroup $\Gamma=\Gamma_1(N)$, set $\mathbb{Z}^{\Gamma}=\mathbb{Z}[\zeta_N,1/N]$ and denote by $M^{\Gamma}_*$ the graded ring of modular forms w.r.t.~$\Gamma$ which expand integrally, i.e.~which lie in $\mathbb{Z}^{\Gamma}[\![q]\!]$. The ring of {\em divided congruences} $D^{\Gamma}$  consists of those rational combinations of modular forms which expand integrally; this ring can be filtered by setting
$$D_k^{\Gamma}=\left\{\left.f={\textstyle{\sum_{i=0}^{k}}}f_i\ \right| f_i\in M_i^{\Gamma}\otimes\mathbb{Q},\ f\in\mathbb{Z}^{\Gamma}[\![q]\!]\right\}.$$
Finally, introduce $$\underline{\underline{D}}^{\Gamma}_{k}=D^{\Gamma}_k+M_0^{\Gamma}\otimes\mathbb{Q}+M_k^{\Gamma}\otimes\mathbb{Q}.$$

Now, if $Ell^{\Gamma}$ denotes the complex oriented elliptic cohomology theory associated to the universal curve over the ring of modular forms w.r.t.\ $\Gamma$, the composite
$$E_2^{2,2k+2}[MU]\rightarrow E_2^{2,2k+2}[Ell^{\Gamma}]\rightarrow\underline{\underline{D}}^{\Gamma}_{k+1}\otimes{\mathbb{Q/Z}}$$
is injective (away from primes dividing the level $N$) and can be combined with the composite $\pi^{st}_{2k}\rightarrow E^{2,2k+2}_{\infty}[MU]\rightarrow E^{2,2k+2}_2[MU]$. Then, for a fixed level $N$ (which we suppress from the notation), the $f$-invariant becomes a map
\begin{equation}
f:\pi^{st}_{2k}\rightarrow\underline{\underline{D}}^{\Gamma}_{k+1}\otimes{\mathbb{Q/Z}}.
\end{equation}


 In a more geometrical fashion, the $f$-invariant can be formulated as an elliptic genus of manifolds with corners of codimension two \cite{Laures:2000bs}; as shown in the author's thesis \cite{Bodecker:2008pi}, this allows to use techniques from index theory to understand and, at least in some cases, to calculate the $f$-invariant analytically (see also \cite{Bunke:2008} for a -- slightly different -- analytical approach to the $f$-invariant).
In the situation of the product of two framed manifolds, the $f$-invariant can actually be computed from the $e_{\mathbb{C}}$-invariants of the factors. The purpose of this note is to determine the $f$-invariant of all such products:


\begin{thm*}
Let $x_{4k-1}$ be a generator of {\em Im$J$} in dimension $4k-1$ and let $\mu_{8k+1}$ be a representative of the $\mu$-family constructed by Adams. Then, for the level $N=3$, we have:
\begin{itemize}
\item[\textup{(i)}] $f(x_3^2)\equiv\frac{1}{2}\left(\frac{E_1^2-1}{12}\right)^2$,
\item[\textup{(ii)}] $f(x_7^2)\equiv\frac{1}{2}\left(\frac{E_4-1}{240}\right)^2$,
\item[\textup{(iii)}] $f(\mu_{8k+1}\mu_{8k'+1})\equiv\frac{1}{2}\frac{E_1-1}{2}$,
\item[\textup{(iv)}] $f(\mu_{8k+1}x_{8k'-1})\equiv\frac{1}{2}\frac{E_4-1}{240}$,
\item[\textup{(v)}] $f(\mu_{8k+1}x_{8k'+3})\equiv0$.
\end{itemize}
Furthermore, for {\em{any}} level $N>1$, we have:
\begin{itemize}
\item[\textup{(vi)}]$f(x_{4k-1}x_{4k'-1})\equiv0,$
\end{itemize}
unless $k=k'$ equals one or two.
\end{thm*}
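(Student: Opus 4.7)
The plan is to leverage the product formula for the $f$-invariant mentioned in the introduction: for two framed classes $[M]\in\pi^{st}_{2k-1}$ and $[N]\in\pi^{st}_{2k'-1}$ with chosen complex null-bordisms $V$ and $W$, the class $f([M][N])\in\underline{\underline{D}}^{\Gamma}_{k+k'}\otimes\mathbb{Q}/\mathbb{Z}$ is an explicit expression in the Todd genera $\mathrm{Td}(V)$, $\mathrm{Td}(W)$ (rational lifts of $e_{\mathbb{C}}([M])$, $e_{\mathbb{C}}([N])$) and the full elliptic genera $\phi^{Ell}(V)$, $\phi^{Ell}(W)$ (modular-form refinements of weights $k$ and $k'$). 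By multiplicativity of $\phi^{Ell}$ on products, the theorem reduces to an explicit modular-form identity in $\underline{\underline{D}}^{\Gamma}\otimes\mathbb{Q}/\mathbb{Z}$.

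The first step is to choose, for each class appearing in the theorem, a concrete $\phi^{Ell}$-lift at level $N=3$. For $x_3$ (with $e_{\mathbb{C}}=1/12$) I would use a weight-$2$ lift proportional to $(E_1^2-1)/12$; for $x_7$ (with $e_{\mathbb{C}}=-1/240$) a weight-$4$ lift proportional to $(E_4-1)/240$; for $\mu_{8k+1}$ (with $e_{\mathbb{C}}=1/2$) the weight-$(4k+1)$ lift $(E_1^{4k+1}-1)/2$; and for a generic $x_{4k-1}\in\mathrm{Im}\,J$ a weight-$2k$ Eisenstein-type lift realising the relevant Bernoulli quotient. Integrality in $\mathbb{Z}^{\Gamma}[\![q]\!]$ of each of these is standard at level $\Gamma_1(3)$, via von Staudt--Clausen and the usual Kummer congruences.

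The second step is to insert these lifts into the product formula, expand, and simplify in $\underline{\underline{D}}^{\Gamma}_{*}\otimes\mathbb{Q}/\mathbb{Z}$. One discards the purely weight-$0$ and top-weight components, then applies Fermat-type congruences such as $E_1^{4k+1}\equiv E_1\pmod{4}$ and analogous congruences relating $E_4^{k}$ to $E_4$ modulo appropriate denominators, in order to collapse the middle-weight remainder to the $k$-independent normal forms displayed in (iii) and (iv). Cases (i) and (ii) arise as the genuine self-products, where the $\tfrac{1}{2}$-prefactor is dictated by the product formula; case (v) follows because the relevant product lies, at level $3$, entirely inside the integer divided congruences. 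For the general vanishing (vi) at level $N>1$, one shows that outside the exceptional pairs $(k,k')=(1,1),(2,2)$ the product of the canonical Eisenstein lifts of $e_{\mathbb{C}}(x_{4k-1})$ and $e_{\mathbb{C}}(x_{4k'-1})$ becomes integral modulo the top- and bottom-weight pieces, by a denominator count via von Staudt--Clausen for the Bernoulli denominators, together with the integrality of Eisenstein coefficients; the only obstructions that survive are exactly the isolated denominators $12$ and $240$ of $B_2/2$ and $B_4/8$.

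The main technical obstacle is the verification of the modular-form congruences underlying these simplifications: fixing the correct canonical $\phi^{Ell}$-lifts at level $\Gamma_1(3)$ and at general $\Gamma_1(N)$, and checking the Fermat-type congruences that collapse each product to its $k$-independent normal form. Once these are in place, the reductions in $\underline{\underline{D}}^{\Gamma}_{*}\otimes\mathbb{Q}/\mathbb{Z}$ reduce to $q$-expansion bookkeeping, and the statements (i)--(vi) follow.
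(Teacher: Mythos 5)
Your overall strategy coincides with the paper's: both reduce the computation to a product formula expressing $f(Y_1\times Y_2)$ as (an integral modular-form lift of) one $e_{\mathbb{C}}$-invariant times the other, then choose Eisenstein-type lifts and finish with Fermat-type and von Staudt--Clausen congruences. Your choices of lifts are compatible with the paper's (e.g.\ $(E_1^{4k+1}-1)/2$ in place of $(E_1E_4^k-1)/2$ for $\mu_{8k+1}$; both differences are even). However, there is a genuine gap in your treatment of (vi), precisely at the prime $2$. For $k=2l$ the class $x_{8l-1}$ has $e_{\mathbb{C}}$-invariant $B_{4l}/8l$, whose denominator is \emph{twice} the von Staudt--Clausen denominator $j_{4l}$ of $B_{4l}/4l$; if $4l=(2n'+1)2^{m+2}$ then $2^{m+4}$ divides $2j_{4l}$. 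The plain Euler--Fermat congruence gives only $d^{4l}\equiv 1\bmod 2^{m+3}$ for odd $d$, which is one power of $2$ short. The paper supplies exactly the missing ingredient as Lemma 2(ii): $\bigl(d^{(2n'+1)2^n}-1\bigr)d^{n+2}\equiv 0\bmod 2^{n+2}$, i.e.\ the sharper congruence $d^{2^n}\equiv 1\bmod 2^{n+2}$ for odd $d$, together with the accompanying power $d^{n+2}$ that controls the even divisors in $\sigma_{2k'-1+4l}(r)-\sigma_{2k'-1}(r)$. Your "denominator count via von Staudt--Clausen together with integrality of Eisenstein coefficients" does not yield this extra factor of $2$, so as written your argument leaves a possibly nonzero residue of order $2$ for every even $k$, not just $k=2$. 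Moreover, it is precisely the side condition $2k'-1\geq m+4$ in that refined congruence that fails only for $k=k'=1$ and $k=k'=2$; without it you cannot justify your assertion that the only surviving obstructions are the denominators $12$ and $240$.

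A second, smaller omission concerns (v) in the case $k=0$: here $f(x_3\mu_{8k'+1})\equiv\frac{1}{2}\frac{E_1^2-1}{12}$, and its vanishing is not a generic Eisenstein integrality statement but requires adding a suitable rational multiple of a pure top-weight form; the paper proves the needed level-$3$ congruence $\frac{1}{2}\bigl\{\frac{E_1^2-1}{12}+(2k+1)\frac{E_3-1}{9}\bigr\}\in\mathbb{Z}[\![q]\!]$ as a separate lemma. Your proposal asserts that "the relevant product lies entirely inside the integer divided congruences" without providing this congruence, which is the actual content of that case.
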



Some comments are in order before presenting the proof: 

First of all, we have to admit that at {\em even} levels $N=2l$, i.e.\ when two is inverted, our theorem  does not provide anything new, since the products of alpha elements at odd primes are known to vanish \cite{Miller:1977ya} (although our result does not rely on this fact). At odd levels however, things become more interesting:

It might be somewhat amusing to find the two $8$-periodic families (iii) and (iv), each (within its family) admitting the same representative of the $f$-invariant, but the experienced reader will surely recognize these as corresponding to $\mu_{8\left(k+k'\right)+2}\in$ coker$J$ and Im$J_{8\left(k+k'\right)}$, respectively. 

Furthermore, our result shows that beyond these two families, only the exceptional cases (i) and (ii) (corresponding to the Kervaire elements of product type) admit a non-trivial $f$-invariant.

Finally, we would like to point out that the choice $N=3$ made in results (i) through (v) is simply a matter of convenience; similar statements can be established at all odd levels.

Summarizing, and bearing in mind that we work on the level of manifolds, i.e.\ representatives of permanent cycles, this theorem may be thought of as an elliptic  (and strengthened) analog of \cite[Theorem 5.5.8.]{Ravenel:2004xh}.

\section{Proof of the Theorem}


The determination of the $f$-invariant of a product from the $e_{\mathbb{C}}$-invariants of its factors is made possible by the following result:

\begin{lem}\textup{\cite{Bodecker:2008pi}}\label{f_from_e}
Let $Y_1$, $Y_2$ be odd-dimensional framed manifolds, and let $m(Y_i)$ be {\em any} modular form of weight  $(\dim Y_i +1)/2$ w.r.t.~the fixed congruence subgroup $\Gamma=\Gamma_1(N)$ such that $\bar{m}(Y_i)=m(Y_i)-e_{\mathbb{C}}(Y_i)\in\mathbb{Z}^{\Gamma}[\![q]\!]$. Then we have
$$f(Y_1\times Y_2)\equiv \bar{m}(Y_1) e_{\mathbb{C}}(Y_2)\equiv-\bar{m}(Y_2) e_{\mathbb{C}}(Y_1).$$
In particular, the $f$-invariant of a product is antisymmetric under exchange of the factors.
\end{lem}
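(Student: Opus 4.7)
My plan is to realize the stated identity geometrically through the natural codimension-two corner bordism built from null-bordisms of the factors, and then to extract the result by tracking which summands are killed by the relations defining $\underline{\underline{D}}^{\Gamma}_{k+1} \otimes \mathbb{Q}/\mathbb{Z}$.

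Since each $Y_i$ is odd-dimensional and $MU_{\mathrm{odd}} = 0$, one may choose stably almost complex manifolds $Z_i$ with $\partial Z_i = Y_i$, and by Conner-Floyd $\mathrm{Td}(Z_i) \equiv e_{\mathbb{C}}(Y_i) \pmod{\mathbb{Z}}$. The product $Z_1 \times Z_2$ then carries a natural codimension-two corner structure with faces $Z_1 \times Y_2$ and $Y_1 \times Z_2$ meeting along the framed corner $Y_1 \times Y_2$; this is precisely the geometric object to which the corner-index formula for $f$ from \cite{Bodecker:2008pi} applies. I would first reduce to the canonical choice $m(Y_i) = \varphi_{Ell}^{\Gamma}(Z_i)$: this is a weight-$(\dim Y_i + 1)/2$ modular form whose $q^0$-coefficient is $\mathrm{Td}(Z_i)$ and whose higher $q$-coefficients are integral over $\mathbb{Z}^{\Gamma}$, so that $\bar{m}(Y_i) \in \mathbb{Z}^{\Gamma}[\![q]\!]$ as required. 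Any other admissible $m(Y_i)$ differs by an integral modular form (geometrically, by changing $Z_i$ by a closed stably complex manifold), and this ambiguity alters $\bar{m}(Y_i) e_{\mathbb{C}}(Y_j)$ only by an element of $\mathbb{Z}^{\Gamma}[\![q]\!]$, hence does not affect the class in $\underline{\underline{D}}^{\Gamma}_{k+1} \otimes \mathbb{Q}/\mathbb{Z}$.

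The main algebraic input is multiplicativity of the elliptic genus: $\varphi_{Ell}^{\Gamma}(Z_1 \times Z_2) = m(Y_1)\,m(Y_2)$. Expanding via $m(Y_i) = \bar{m}(Y_i) + e_{\mathbb{C}}(Y_i)$ yields
$$m(Y_1)\,m(Y_2) \;=\; \bar{m}(Y_1)\bar{m}(Y_2) + \bar{m}(Y_1) e_{\mathbb{C}}(Y_2) + \bar{m}(Y_2) e_{\mathbb{C}}(Y_1) + e_{\mathbb{C}}(Y_1) e_{\mathbb{C}}(Y_2).$$
Viewed in $\underline{\underline{D}}^{\Gamma}_{k+1} \otimes \mathbb{Q}/\mathbb{Z}$, the left-hand side lies in $M_{k+1}^{\Gamma} \otimes \mathbb{Q}$, the first summand on the right is an integral element of $D^{\Gamma}_{k+1}$, and the last term lies in $M_0^{\Gamma} \otimes \mathbb{Q}$; all three are killed by the defining relations. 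What remains is the congruence $\bar{m}(Y_1) e_{\mathbb{C}}(Y_2) + \bar{m}(Y_2) e_{\mathbb{C}}(Y_1) \equiv 0$, which is precisely the asserted antisymmetry.

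The step I expect to be the main obstacle is the geometric identification of $f(Y_1 \times Y_2)$ with one of the two mixed terms individually, rather than merely with their sum. This requires the explicit corner formula from \cite{Bodecker:2008pi}, in which $f$ is computed from the bulk elliptic genus $\varphi_{Ell}^{\Gamma}(Z_1 \times Z_2)$ corrected by APS-type eta-contributions associated to the two faces. For the product corner these contributions factorize by multiplicativity into (Todd genus on one factor)$\,\times\,$(elliptic eta-form on the other), and after collecting signs they assemble to the antisymmetrized combination $\tfrac{1}{2}\bigl(\bar{m}(Y_1) e_{\mathbb{C}}(Y_2) - \bar{m}(Y_2) e_{\mathbb{C}}(Y_1)\bigr)$, which by the antisymmetry just derived equals $\bar{m}(Y_1) e_{\mathbb{C}}(Y_2)$ in $\underline{\underline{D}}^{\Gamma}_{k+1} \otimes \mathbb{Q}/\mathbb{Z}$. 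The technical heart of the argument is thus the careful bookkeeping of signs, factorizations, and integrality in this APS-type face contribution, which is the place where the geometric and analytic structure from the thesis is genuinely used.
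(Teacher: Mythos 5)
The paper does not actually prove this lemma --- it is quoted verbatim from the author's thesis \cite{Bodecker:2008pi} --- so there is no in-text argument to compare yours against; I can only assess your proposal on its own terms. The purely algebraic half of your argument is correct and complete: since $m(Y_1)m(Y_2)$ is a rational modular form of weight $k+1$, since $\bar{m}(Y_1)\bar{m}(Y_2)$ lies in $D^{\Gamma}_{k+1}$, and since $e_{\mathbb{C}}(Y_1)e_{\mathbb{C}}(Y_2)$ is a rational constant, expanding $(\bar{m}(Y_1)+e_{\mathbb{C}}(Y_1))(\bar{m}(Y_2)+e_{\mathbb{C}}(Y_2))$ does give $\bar{m}(Y_1)e_{\mathbb{C}}(Y_2)+\bar{m}(Y_2)e_{\mathbb{C}}(Y_1)\equiv0$ in $\underline{\underline{D}}^{\Gamma}_{k+1}\otimes\mathbb{Q}/\mathbb{Z}$. (Note that this step needs no geometry at all --- no $Z_i$ and no multiplicativity of the genus --- only the stated hypotheses on $m(Y_i)$; and, applied to two different admissible choices of $m(Y_1)$, it also yields the independence of $\bar{m}(Y_1)e_{\mathbb{C}}(Y_2)$ from that choice, which you will need.)

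There are, however, two genuine problems. First, and decisively: the identification $f(Y_1\times Y_2)\equiv\bar{m}(Y_1)e_{\mathbb{C}}(Y_2)$ --- which is the entire content of the lemma, the antisymmetry being formal --- is not proved. You appeal to ``the explicit corner formula from \cite{Bodecker:2008pi}'' and then assert, without any computation, that the APS-type face contributions of $Z_1\times Z_2$ factorize and ``after collecting signs'' assemble to $\tfrac{1}{2}(\bar{m}(Y_1)e_{\mathbb{C}}(Y_2)-\bar{m}(Y_2)e_{\mathbb{C}}(Y_1))$. This is circular (the corner formula and the present lemma live in the same reference) and, more importantly, the sign/normalization bookkeeping you defer is exactly where the statement could fail; nothing in your text rules out, say, an overall sign or an extra integral correction term. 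Second, a local error: you claim that replacing $m(Y_i)$ by $m(Y_i)+g$ with $g\in M^{\Gamma}_{w_i}$ alters $\bar{m}(Y_i)e_{\mathbb{C}}(Y_j)$ ``only by an element of $\mathbb{Z}^{\Gamma}[\![q]\!]$''. It alters it by $g\cdot e_{\mathbb{C}}(Y_j)$, which is a non-integral rational multiple of a modular form of intermediate weight $0<w_i<k+1$ and hence is not obviously in $\underline{\underline{D}}^{\Gamma}_{k+1}$ at all; that it nevertheless vanishes in the quotient requires an argument (either the observation above, or multiplying by a suitable power of $E_1\equiv1$ to push the term into top weight). As it stands, the proposal proves the second congruence of the lemma but not the first.
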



Besides the knowledge of the possible values of the $e_{\mathbb{C}}$-invariant, we need some elementary number theory:
 
\begin{lem}\label{euler-fermat}
For $n\geq1$, we have:
\begin{itemize}
\item[\textup{(i)}] $\left(d^{p^{n-1}\left(p-1\right)}-1\right)d^{n}\equiv0\mod p^{n}$,
\item[\textup{(ii)}] $\left(d^{\left(2n'+1\right)2^n}-1\right)d^{n+2}\equiv0\mod2^{n+2}$.
\end{itemize}
\end{lem}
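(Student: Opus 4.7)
The plan is to dispose of both parts by splitting on whether $d$ shares a factor with the prime in question, reducing each to an invocation of Euler's theorem in the form appropriate to that prime.

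For part (i), if $p\mid d$ then $p^{n}\mid d^{n}$ and the full product vanishes modulo $p^{n}$ without anything else being needed. In the remaining case $\gcd(d,p)=1$, I would apply Euler's theorem to the unit group $(\mathbb{Z}/p^{n}\mathbb{Z})^{\times}$, whose order is $\varphi(p^{n})=p^{n-1}(p-1)$; this immediately gives $d^{p^{n-1}(p-1)}\equiv 1\pmod{p^{n}}$, so the first factor on its own is already divisible by $p^{n}$.

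Part (ii) proceeds by the same dichotomy, but now the asymmetry between the two statements reflects the non-cyclic structure of the $2$-adic units. If $d$ is even then $2^{n+2}\mid d^{n+2}$ and we are done. If $d$ is odd, I would invoke the isomorphism $(\mathbb{Z}/2^{n+2}\mathbb{Z})^{\times}\cong \mathbb{Z}/2\times\mathbb{Z}/2^{n}$, which is valid precisely for $n\geq 1$ (i.e.\ for moduli $2^{k}$ with $k\geq 3$). The exponent of this group is $2^{n}$, so $d^{2^{n}}\equiv 1\pmod{2^{n+2}}$, and raising to the odd power $2n'+1$ preserves the congruence:
$$d^{(2n'+1)2^{n}}=\bigl(d^{2^{n}}\bigr)^{2n'+1}\equiv 1\pmod{2^{n+2}}.$$

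There is no genuine obstacle here; both assertions are elementary consequences of standard facts about units in $\mathbb{Z}/p^{k}\mathbb{Z}$. The extra factors $d^{n}$ and $d^{n+2}$ play no role beyond absorbing the case when the prime divides $d$, and the reason for stating the lemma in this slightly artificial uniform shape is presumably that it will be applied later to denominators $d$ whose coprimality with the relevant prime is not known in advance.
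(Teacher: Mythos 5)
Your proof is correct and follows essentially the same route as the paper: the same case split on whether the prime divides $d$ (so that $d^{n}$ resp.\ $d^{n+2}$ absorbs the non-coprime case), with Euler--Fermat handling the coprime case of (i). The only difference is in (ii), where the paper establishes the key congruence $(2k+1)^{2^{n}}\equiv 1 \pmod{2^{n+2}}$ by an explicit binomial expansion rather than by citing the structure of $(\mathbb{Z}/2^{n+2}\mathbb{Z})^{\times}\cong\mathbb{Z}/2\times\mathbb{Z}/2^{n}$; both amount to the same fact about the $2$-adic units.
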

\begin{proof}
Part (i) is a simple consequence of the Euler--Fermat theorem \cite{Apostol:1976qf} which states that for $(a,m)=1$, we have:
$$a^{\varphi(m)}\equiv1\mod m,$$
where  $\varphi(m)=m\prod_{p|m}\left(1-\frac{1}{p}\right)$ is the Euler totient. 

The refinement (ii) stems from the fact that for odd $d^{2n'+1}=2k+1$:
\begin{equation*}
\begin{split}
(2k+1)^{2^n}&=1+2^{n}2k+\frac{2^n\left(2^n-1\right)}{2}4k^2+\sum_{i\geq3}\left(\begin{array}{c} 2^n\\ i\end{array}\right)\left(2k\right)^i\\
&=1+2^{n+1}\left(k+\left(2^n-1\right)k^2\right)+\sum_{i\geq3}\left(\begin{array}{c} 2^n\\ i\end{array}\right)\left(2k\right)^i\\
&\equiv1\mod2^{n+2},\\
\end{split}
\end{equation*}
since $\nu_2\left(i!\right)<i$ and $\nu_2\left(\left(2^n\right)!/\left(2^n-i\right)!\right)\geq n+1$ for $i\geq3$.
\end{proof}


Concerning modular forms, recall that for even $k>2$ the Eisenstein series
$$E_k=-\frac{2k}{B_k}G_k=1-\frac{2k}{B_k}\sum_{n=1}^{\infty}{\sigma_{k-1}(n)q^n}$$
is a modular form of weight $k$ w.r.t.\ the {\em full} modular group. Furthermore recall that the ring of modular forms w.r.t.~$\Gamma=\Gamma_1(3)$ is generated by
\begin{equation*}
\begin{split}
E_1&=1+6\sum_{n=1}^{\infty}\sum_{d|n}(\frac{d}{3})q^n,\\
E_3&=1-9\sum_{n=1}^{\infty}\sum_{d|n}(\frac{d}{3})d^2q^n,\\
\end{split}
\end{equation*}
where $(\frac{d}{3})$ is the Legendre Symbol; these modular forms satisfy the following useful congruence:
\begin{lem}\label{E_1squaredE_3}
$$\frac{1}{2}\left\{\frac{E_1^2-1}{12}+(2k+1)\frac{E_3-1}{9}\right\}\in\mathbb{Z}[\![q]\!].$$
\end{lem}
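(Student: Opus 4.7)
The plan is to convert the integrality claim into a mod-$2$ congruence on $q$-expansions and verify it coefficient-by-coefficient. Setting $A=(E_1-1)/6$ and $B=-(E_3-1)/9$, both of which lie in $\mathbb{Z}[\![q]\!]$ with
\[
[q^n]A=\sum_{d\mid n}\bigl(\tfrac{d}{3}\bigr),\qquad [q^n]B=\sum_{d\mid n}\bigl(\tfrac{d}{3}\bigr)d^2,
\]
a direct expansion of $E_1=1+6A$ gives $(E_1^2-1)/12 = A+3A^2$, while $(E_3-1)/9=-B$. Hence the bracketed expression equals $A+3A^2-(2k+1)B$; since $3$ and $2k+1$ are both odd, its divisibility by~$2$ is equivalent to $A + A^2 + B \equiv 0 \pmod{2}$ in $\mathbb{F}_2[\![q]\!]$.

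The main tool will be the Frobenius congruence $A(q)^2 \equiv A(q^2) \pmod 2$, combined with the elementary facts that $\bigl(\tfrac{d}{3}\bigr)\equiv 1\pmod 2$ iff $3\nmid d$, and $d^2\equiv d\pmod 2$. Under these reductions, the mod-$2$ coefficients of $q^n$ in $A$, $A^2$, and $B$ become, respectively, the divisor counts $a_n=\#\{d\mid n:3\nmid d\}$, $a_n^{(2)}=\#\{d\mid n/2:3\nmid d\}$ (taken as $0$ when $n$ is odd), and $b_n=\#\{d\mid n:3\nmid d,\,d\text{ odd}\}$.

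It then remains to check that $a_n+a_n^{(2)}+b_n\equiv 0\pmod 2$ for every $n$. For $n$ odd, every divisor of $n$ is automatically odd, so $a_n=b_n$ and $a_n^{(2)}=0$, and the congruence is trivial. For $n$ even, the map $d\mapsto d/2$ provides a bijection between even divisors of $n$ coprime to~$3$ and divisors of $n/2$ coprime to~$3$, yielding $a_n=b_n+a_n^{(2)}$; again the sum is even. The main (and essentially only) obstacle is this careful bookkeeping of parity and $3$-divisibility among divisors—once Frobenius is invoked, no deeper technical input is required.
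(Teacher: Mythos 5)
Your proof is correct, and it reaches the same final reduction as the paper (a mod-$2$ comparison of divisor sums) by a slightly different and more self-contained route. The paper's argument begins by quoting the closed-form $q$-expansion $E_1^2=1+12\sum_{n\geq1}\sum_{3\nmid d\mid n}d\,q^n$ — a genuine fact about the weight-two Eisenstein series on $\Gamma_1(3)$ that is stated without proof — and then finishes in one line with $\sum_{d\mid n}\bigl(\tfrac{d}{3}\bigr)d^2\equiv\sum_{3\nmid d\mid n}d^2\equiv\sum_{3\nmid d\mid n}d\pmod 2$. You avoid that external input entirely: writing $E_1=1+6A$ and expanding $(E_1^2-1)/12=A+3A^2$ uses only the displayed $q$-expansion of $E_1$ itself, and the Frobenius congruence $A(q)^2\equiv A(q^2)\pmod 2$ together with the bijection $d\mapsto d/2$ between even divisors of $n$ prime to $3$ and divisors of $n/2$ prime to $3$ supplies, in effect, a mod-$2$ proof of the identity the paper takes for granted. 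The cost is a longer bookkeeping step (the case split on the parity of $n$ and the identity $a_n=b_n+a_n^{(2)}$, both of which you carry out correctly); the benefit is that nothing beyond the generators' stated $q$-expansions and elementary parity arguments is needed.
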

\begin{proof}
Since $$E_1^2=1+12\sum_{n=1}^{\infty}\sum_{3\nmid d|n}d\ q^n,$$
we have to show that
$$\frac{1}{2}\left\{\sum_{n=1}^{\infty}\sum_{3\nmid d|n}d\ q^n-(2k+1)\sum_{n=1}^{\infty}\sum_{d|n}(\frac{d}{3})d^2q^n\right\}\in\mathbb{Z}[\![q]\!],$$
but obviously
\begin{equation*}
\sum_{d|n}(\frac{d}{3})d^2\equiv\sum_{3\nmid d|n}d^2\equiv\sum_{3\nmid d|n}d \mod2.
\end{equation*}
\end{proof}


Now let us prove the Theorem:

\subsection*{The exceptional cases (i) and (ii)}


Looking at the list, we may assume that $e_{\mathbb{C}}(x_3)$ is given by $-\frac{1}{12}$; of course, this is in accordance with the well-known fact that $x_3$ may be represented by the sphere $S^3$. So  we apply Lemma \ref{f_from_e},  choose $$-\bar{m}(x_3)=\frac{E_1^2-1}{12}=\sum_{n=1}^{\infty}\sum_{3\nmid d|n}d\ q^n\in\mathbb{Z}[\![q]\!],$$
and compute
$$\frac{1}{12}\frac{E_1^2-1}{12}\equiv-\frac{1}{2}\left(\frac{E_1^2-1}{12}\right)^2\equiv\frac{1}{2}\left(\frac{E_1^2-1}{12}\right)^2\mod  \underline{\underline{D}}_4^{\Gamma};$$
this proves (i).

Similarly, $x_7$ may be represented by the sphere $S^7$ (see e.g.\ \cite{Conner:1966jw}), resulting in $e_{\mathbb{C}}(x_7)=\frac{1}{240}$, so  we choose
$$\bar{m}(x_7)=\frac{E_4-1}{240}=\sum_{n=1}^{\infty}\sum_{d|n}d^3q^n\in\mathbb{Z}[\![q]\!]$$ 
and `complete the square', hence establishing (ii).

\begin{rmk} Of course, (i) and (ii) correspond to the Kervaire elements of product type, i.e.\ $\alpha_{2/2}^2=\beta_{2/2}$ and $\alpha_{4/4}^2=\beta_{4/4}$ at the prime $p=2$. In order to see this on the level of $f$-invariants, we may compare (i) and (ii) to the results of \cite{Hornbostel:2007ss}, i.e.\
$$f\left(\beta_{2/2}\right)\equiv\frac{1}{2}\left(\frac{E_1^2-1}{4}\right)^2\in\underline{\underline{D}}_4^{\Gamma}\otimes{\mathbb{Q/Z}},$$
$$f\left(\beta_{4/4}\right)\equiv\frac{1}{2}\left(\frac{E_1^2-1}{4}\right)^4+\frac{1}{2}\left(\frac{E_1^2-1}{4}\right)^3\in\underline{\underline{D}}_8^{\Gamma}\otimes{\mathbb{Q/Z}};$$
but a short computation modulo $\underline{\underline{D}}_8^{\Gamma}$ reveals:
\begin{equation*}
\begin{split}
&\frac{1}{2}\left(\frac{E_4-1}{16}\right)^2\\
=&\frac{1}{2}\left(\frac{1}{2}\frac{E_1^4-1}{8}+\frac{1}{2}\left(E_1^4-E_1E_3\right)\right)^2\\
=&\frac{1}{2}\left(\frac{1}{4}\left(\frac{E_1^4-1}{8}\right)^2+\frac{1}{2}\frac{E_1^4-1}{8}\left(E_1^4-E_1E_3\right)+\frac{1}{4}\left(E_1^4-E_1E_3\right)^2\right)\\
\equiv&\frac{1}{32}E_1E_3+\frac{1}{2}\left(\frac{E_1^2-1}{4}\right)^4+\frac{1}{2}\left(\frac{E_1^2-1}{4}\right)^3+\frac{1}{8}\left(\frac{E_1^2-1}{4}\right)^2\\
\equiv&\frac{1}{16}\frac{E_1^2-1}{4}+\frac{1}{2}\left(\frac{E_1^2-1}{4}\right)^4+\frac{1}{2}\left(\frac{E_1^2-1}{4}\right)^3+\frac{1}{16}\frac{E_1^4-1}{8}-\frac{1}{16}\frac{E_1^2-1}{4}\\
\equiv&\frac{1}{2}\left(\frac{E_1^2-1}{4}\right)^4+\frac{1}{2}\left(\frac{E_1^2-1}{4}\right)^3,\\
\end{split}
\end{equation*}
since
\begin{equation*}
\begin{split}
-\frac{1}{32}E_1E_3&\equiv\frac{1}{2}\frac{E_4-1}{16}E_1E_3\equiv\frac{1}{2}\frac{E_4-1}{16}E_3\\
&\equiv\frac{E_4-1}{16}\frac{1}{2}\left(E_3-1\right)\equiv\frac{1}{2}\frac{E_1^2-1}{4}\frac{E_4-1}{16}\\
&\equiv\frac{1}{2}\frac{E_6-1}{8}\frac{E_1^2-1}{4}\equiv-\frac{1}{16}\frac{E_1^2-1}{4}.\\
\end{split}
\end{equation*}
\end{rmk}

\begin{rmk}
From the point of view of arithmetics, the product-type Kervaire elements ($x_3^2=\nu^2$ and $x_7^2=\sigma^2$) are lucky exceptions: They happen to occur in the dimensions just low enough to be left unscathed by the congruence (ii) of Lemma \ref{euler-fermat}.
\end{rmk}


\subsection*{The 8-periodic families (iii) and (iv)} 
According to \cite{Adams:1966ys}, we know that $e_{\mathbb{C}}(\mu_{8k+1})=1/2$, and this is the only possible non-trivial value of $e_{\mathbb{C}}$ in dimension $8k+1$. Thus, we judiciously choose 
$$f(\mu_{8k+1}\mu_{8k'+1})\equiv\frac{1}{2}\frac{E_1E_4^k-1}{2}\in\underline{\underline{D}}_{4\left(k+k'\right)+2}^{\Gamma}\otimes{\mathbb{Q/Z}},$$
but
$$\textstyle{\frac{1}{2}\frac{E_1E_4^k-1}{2}-\frac{1}{2}\frac{E_1-1}{2}=\frac{E_4^{k}-1}{4}E_1\in\mathbb{Z}[\![q]\!]}$$
yields (iii). For (iv), we may assume that $e_{\mathbb{C}}(x_{8k'-1})\equiv B_{4k'}/8k'\mod\mathbb{Z}$, hence
$$f(\mu_{8k+1}x_{8k'-1})\equiv{\textstyle{\frac{1}{2}\frac{B_{4k'}}{8k'}}}\left(E_{4k'}-1\right)\equiv{\textstyle{\frac{1}{2}}}\sum\sigma_{4k'-1}(n)q^n\in\underline{\underline{D}}_{4\left(k+k'\right)+1}^{\Gamma}\otimes{\mathbb{Q/Z}},$$
which is congruent to the desired result by means of Lemma \ref{euler-fermat}.

\begin{rmk}
In order to express (iv) in terms of $E_1$ and $E_3$, it is useful to compute the following relation modulo $\underline{\underline{D}}_{4k+1}^{\Gamma}$:
\begin{equation*}
\begin{split}
&\frac{1}{2}\frac{E_4-1}{16}\\
=&\frac{1}{2}\left[\left(\frac{E_1^2-1}{4}\right)^2+\frac{1}{2}\frac{E_1^2-1}{4}+\frac{1}{2}\left(E_1^4-E_1E_3\right)\right]\\
=&\frac{1}{2}\left(\frac{E_1^2-1}{4}\right)^2+\frac{1}{2}\left[\frac{1}{2}\frac{E_1^2-1}{4}-\frac{1}{2}\left(E_3-1\right)\right]-\frac{1}{2}\frac{E_1-1}{2}E_3+\frac{E_1^4-1}{4}\\
\equiv&\frac{1}{2}\left(\frac{E_1^2-1}{4}\right)^2+\frac{1}{2}\left[\frac{1}{2}\frac{E_1^2-1}{4}-\frac{1}{2}\left(E_3-1\right)\right]+\frac{1}{2}\frac{E_1-1}{2}\frac{E_1^2-1}{4};\\
\end{split}
\end{equation*}
furthermore, we observe that for $k\geq2$ the first summand can be dropped due to $\frac{1}{2}\left(\frac{E_1^2-1}{4}\right)^2\equiv\frac{1}{2}\left(E_3-1\right)^2\equiv\frac{1}{2}E_3^2\equiv\frac{1}{2}E_1^{4k-5}E_3^2$.
\end{rmk}

\begin{rmk}
The elements occurring in part (iii) and (iv) of the Theorem also allow detection via the invariants $d_{\mathbb{R}}$ and $e_{\mathbb{R}}$, respectively, cf.~\cite{Adams:1966ys}. While for low values of $k$, $k'$, the non-triviality of $f$ in these cases is easily checked by hand, the author does not know how to establish non-triviality for {\em all} values of $k$, $k'$.
\end{rmk}


\subsection*{The generic situation}
 For (v),  we may assume that $e_{\mathbb{C}}(x_{8k+3})$ is represented by $B_{4k+2}/(4k+2)$, so, for $k>0$, we have 
$$f(x_{8k+3}\mu_{8k'+1})\equiv{\frac{1}{2}\frac{B_{4k+2}(1-E_{4k+2})}{4k+2}}=\sum\sigma_{4k+1}(n)q^n,$$
whereas for $k=0$ we have
\begin{equation*}
\begin{split}
f(x_3\mu_{8k'+1})&\equiv{{\frac{1}{2}}}\frac{E_1^2-1}{12}\equiv{\frac{1}{2}}\left\{\frac{E_1^2-1}{12}+E_4^{k'}E_3-1\right\}\\
&\equiv{\frac{1}{2}}\left\{\frac{E_1^2-1}{12}+E_3-1\right\}\mod  \underline{\underline{D}}_{4k'+3}^{\Gamma},\\
\end{split}
\end{equation*}
which expands integrally by Lemma \ref{E_1squaredE_3}.

Turning to (vi) and excluding the cases $k=k'=1$ and $k=k'=2$, we assume that $k\leq k'$; thus
$$f(x_{4k-1}x_{4k'-1})\equiv\left(\epsilon(k')\sum\sigma_{2k'-1}(n)q^n\right)\cdot\left(\epsilon(k)\frac{B_{2k}}{4k}\right),$$
where $\epsilon(k)=1$ for $k$ even and two otherwise.

The Theorem of von Staudt--Clausen \cite{Apostol:1976qf} allows the computation of the denominator of the Bernoulli numbers. More precisely, let $j_{2k}$ denote the denominator of $B_{2k}/2k$; if $(p-1)p^{n-1}|2k$, then $p^n|j_{2k}$, 
and this result is sharp. But  by Lemma \ref{euler-fermat} (i) we have:
$$p^{-n}\sum\sigma_{2k'-1+2k}(r)q^r\equiv p^{-n}\sum\sigma_{2k'-1}(r)q^r\  \text{if}\ (p-1)p^{n-1}|2k,$$
and the LHS is $p^{-n}G_{2k+2k'}$ plus a constant, hence vanishes mod $\underline{\underline{D}}_{2\left(k+k'\right)}$.

Similarly, if $k=2l=(2n'+1)2^{m+1}$, $2^{m+4}|2j_{4l}$ (and $2j_{4l}$ is precisely the order of Im$J$ in dimension $8l-1$), and part (ii) of Lemma \ref{euler-fermat} yields:
$$\frac{1}{2^{4+m}}\left(\sum\sigma_{2k'-1+4l}(r)q^r-\sum\sigma_{2k'-1}(r)q^r\right)\equiv0\mod\mathbb{Z}[\![q]\!].$$
This completes the proof.

\bibliography{refbib_edited}
\end{document}